\title[Arc-transitive graphs of valency twice a prime]{Arc-transitive graphs of valency twice a prime admit a semiregular automorphism}
\date{\today}
\author[M. Giudici]{Michael Giudici}
\address{Michael Giudici, Department of Mathematics and Statistics\\ The University of Western Australia\\ 35 Stirling Highway\\ Crawley\\ WA 6009\\ Australia}
\email{michael.giudici@uwa.edu.au}
\author[G. Verret]{Gabriel Verret}
\address{Gabriel Verret, Department of Mathematics\\ The University of Auckland\\ Private Bag 92019\\ Auckland 1142\\ New Zealand}
\email{g.verret@auckland.ac.nz}
\newtheorem{theorem}{Theorem}[section]
\newtheorem{lemma}[theorem]{Lemma}
\newtheorem{corollary}[theorem]{Corollary}
\theoremstyle{remark}
\theoremstyle{definition}
\newtheorem{definition}[theorem]{Definition}
\newcommand{\C}{\mathrm C}
\newcommand{\V}{\mathrm V}
\newcommand{\K}{\mathrm K}
\newcommand{\M}{\mathrm M}
\newcommand{\N}{\mathrm N}
\newcommand{\Aut}{{\rm Aut}}
\newcommand{\Sym}{{\rm Sym}}
\newcommand{\Cos}{{\rm Cos}}
\newcommand{\PSL}{{\rm PSL}}
\newcommand{\PGL}{{\rm PGL}}
\newcommand{\SL}{{\rm SL}}
\newcommand{\GF}{{\mathrm {GF}}}
\renewcommand{\leq}{\leqslant}
\renewcommand{\geq}{\geqslant}
\begin{document}

\begin{abstract}
We prove that every finite arc-transitive graph of valency twice a prime admits a nontrivial semiregular automorphism, that is, a non-identity automorphism whose cycles all have the same length. This is a special case of the Polycirculant Conjecture, which states that all finite vertex-transitive digraphs admit such automorphisms.
\end{abstract}

\maketitle

\section{Introduction}

All graphs in this paper are finite. In 1981, Maru\v{s}i\v{c} asked if every vertex-transitive digraph admits a nontrivial semiregular automorphism~\cite{marusic}, that is, an automorphism whose cycles all have the same length. This question has attracted considerable interest and a generalisation of the affirmative answer is now referred to as the Polycirculant Conjecture \cite{seven}. One line of investigation of this question has been according to the valency of the graph or digraph. Every vertex-transitive graph of valency at most four admits such an automorphism \cite{DMMN,maruscap}, and so does every vertex-transitive digraph of out-valency at most three \cite{GMPV}. Every arc-transitive graph of prime valency  has a nontrivial semiregular automorphism \cite{MichaelXu} and so does every arc-transitive graph of valency 8 \cite{Verret}. Partial results were also obtained for arc-transitive graphs of valency a product of two primes \cite{JingXu}. We continue this theme by proving the following theorem.

\begin{theorem}\label{theo:main}
Arc-transitive graphs of valency twice a prime admit a nontrivial semiregular automorphism.
\end{theorem}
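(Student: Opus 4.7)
Let $\Gamma$ be a counterexample with $|V(\Gamma)|$ minimal, and set $G=\Aut(\Gamma)$. The case $p=2$ is covered by \cite{DMMN,maruscap}, so we may assume $p$ is odd. Since $G$ acts faithfully on $V(\Gamma)$, any nontrivial abelian normal subgroup of $G$ acts semiregularly on $V(\Gamma)$ (because an abelian transitive permutation group is regular, so an abelian normal subgroup of a transitive group acts regularly on each of its orbits with the same kernel). Hence we may assume every minimal normal subgroup of $G$ is a direct product of isomorphic nonabelian simple groups.

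Next I would perform a normal-quotient reduction. If $N\trianglelefteq G$ has at least three orbits on $V(\Gamma)$, then the normal quotient $\Gamma_N$ is arc-transitive under $G/N$ with valency a divisor of $2p$. Valencies $1$ and $2$ produce an obvious semiregular element, valency $p$ reduces to \cite{MichaelXu}, and valency $2p$ invokes minimality. In each case one lifts the resulting semiregular element of $G/N$ to $G$ by replacing a preimage by a suitable power (using that $N$ is a direct product of nonabelian simple groups, so contains many fixed-point-free elements of prime order). We may therefore assume that every nontrivial normal subgroup of $G$ has at most two orbits on $V(\Gamma)$, that is, $G$ is quasiprimitive or biquasiprimitive on vertices.

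The third step exploits the local group $L:=G_v^{\Gamma(v)}$, a transitive permutation group of degree $2p$. For odd $p$, such groups are either primitive (a very short list, essentially $A_{2p}$, $S_{2p}$, and a handful of classical exceptions at small primes) or imprimitive with $p$ blocks of size $2$ or with $2$ blocks of size $p$. Both imprimitive subcases should admit structural reductions: the former via a double-cover-type construction producing an auxiliary arc-transitive graph of valency $p$, and the latter via an index-two subgroup of $G$ whose edge decomposition yields two arc-transitive subgraphs of valency $p$. Both reduce to \cite{MichaelXu}.

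The main obstacle is the remaining case, where $G$ is (bi)quasiprimitive and the local action is primitive. Praeger's O'Nan--Scott theorem for quasiprimitive groups splits this according to the type of a minimal normal subgroup; the affine type is excluded by the first paragraph, and the twisted wreath, simple-diagonal, compound-diagonal, and product-action types can be narrowed down by comparing the permissible local images of $N_v$ with the very restrictive list of primitive groups of degree $2p$. The heart of the proof is the almost simple case: using the Classification of Finite Simple Groups, one must treat each simple group $T$ admitting such an action and, in each instance, exhibit a prime-order element of $G$ whose cycle lengths on $V(\Gamma)$ are all equal (typically produced by locating a prime $r$ dividing $|G|$ but not $|G_v|$, or else a suitable torus element). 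I expect the sharpest casework to arise at small primes $p\in\{3,5\}$, where exceptional primitive groups of degree $2p$ such as $\PGL_2(q)$ for small $q$ occur and must be handled explicitly.
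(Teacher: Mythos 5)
Your reduction breaks at its first step: it is not true that a nontrivial abelian normal subgroup of a vertex-transitive group is semiregular. If $N\trianglelefteq G$ is abelian and intransitive, then $N_v$ fixes $v^N$ pointwise but may act nontrivially on other orbits; the kernels of $N$ on its various orbits are conjugate in $G$ but need not be equal, which is exactly where your parenthetical argument fails. A concrete example is $\Gamma=\C_4$, $G=\Aut(\Gamma)\cong\D_4$ and $N=\langle (1\,3),(2\,4)\rangle$: here $N$ is abelian and normal, yet $(1\,3)$ fixes two vertices. The case of an elementary abelian minimal normal subgroup $N$ with $N_v\neq 1$ is not a degenerate case to be waved away -- it is the entire content of the paper's proof. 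A local argument shows that either the quotient $\Gamma/N$ has valency $2$ and $N$ is a $p$-group (handled via the Praeger--Xu classification of such graphs), or $\Gamma/N$ has valency $p$ and the kernel $K$ of $G$ on $N$-orbits is a $2$-group. In the latter case one cannot just quote the prime-valency result for $G/K$ and ``lift'': a semiregular element of $G/K$ pulls back to a semiregular element of $G$ only under a coprimality hypothesis (Lemma~\ref{lemma:coprime}), which fails precisely when the only semiregular elements available in the quotient have $2$-power order. This forces the paper to invoke the Burness--Giudici list (Theorem~\ref{theo:submain}) of prime-valency (bi)quasiprimitive graphs with no semiregular element of odd prime order, and then to run a delicate argument (the $4$-cycle/``buddy'' and density analysis) bounding a central elementary abelian $2$-group $M$ from above by $|M_v|\,|\V(\Gamma/P)|$ and from below by the minimal dimensions of faithful irreducible $\GF(2)$-modules for $\M_{11}$, $\PSL_2(p)$ and $\PGL_2(p)$.

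The same lifting issue undermines your general normal-quotient induction: minimality yields a semiregular automorphism of the quotient graph, but there is no general mechanism for pulling it back to $G$ -- if there were, the Polycirculant Conjecture would follow by a routine induction. Your claim that a nonabelian minimal normal subgroup ``contains many fixed-point-free elements of prime order'' likewise begs the question when that subgroup is intransitive and non-semiregular; the paper handles nonabelian $N$ by citing Xu's theorem for valency $pq$. Finally, the part you single out as the heart of the matter, namely $G$ quasiprimitive or biquasiprimitive on vertices, is in fact the easy step here: it is already covered by existing theorems of Giudici and Giudici--Xu, which the paper simply quotes, so no fresh CFSG analysis of primitive local actions of degree $2p$ is required, and no analysis of the local group (primitive versus imprimitive of degree $2p$) enters the paper's argument at all.
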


\section{Preliminaries}

If $G$ is a group of automorphisms of a graph $\Gamma$ and $v$ is a vertex of $\Gamma$, we denote by $G_v$ the stabiliser in $G$ of $v$, by $\Gamma(v)$ the neighbourhood of $v$, and by $G_v^{\Gamma(v)}$ the permutation group induced by $G_v$ on $\Gamma(v)$. We will need the following well-known results.

\begin{lemma}\label{lem:locallyreg}
Let $\Gamma$ be a connected graph and $G\leqslant\Aut(\Gamma)$ such that all orbits of $G$ on $\V(\Gamma)$ have the same size. If a prime $p$ divides  $|G_v|$ for some $v\in\V(\Gamma)$, then $p$ divides $|G_v^{\Gamma(v)}|$.
\end{lemma}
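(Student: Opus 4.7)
My plan is a proof by contradiction. Suppose $p\mid|G_v|$ but $p\nmid|G_v^{\Gamma(v)}|$, and let $K$ be the kernel of the action of $G_v$ on $\Gamma(v)$; then $p\mid|K|=|G_v|/|G_v^{\Gamma(v)}|$. Pick a Sylow $p$-subgroup $P$ of $K$, so $P\neq 1$ and $P$ fixes $v$ together with every vertex of $\Gamma(v)$. Setting $F=\mathrm{Fix}_{\V(\Gamma)}(P)$ gives $\{v\}\cup\Gamma(v)\subseteq F$, and it suffices to show $F=\V(\Gamma)$: that will force $P=1$, contradicting $p\mid|G_v|$.

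The equal-orbit-size hypothesis enters via orbit--stabiliser, which gives $|G_u|=|G_v|$ for every vertex $u$. Hence whenever $P\leq G_u$ --- in particular for every $u\in F$ --- one has $|P|=|G_v|_p=|G_u|_p$, making $P$ a Sylow $p$-subgroup of $G_u$. By connectedness of $\Gamma$, showing $F=\V(\Gamma)$ reduces to verifying that $F$ is closed under taking neighbourhoods: whenever $u\in F$, $\Gamma(u)\subseteq F$. At $u=v$ this is exactly the containment $\Gamma(v)\subseteq F$ we already have, and the argument that produced it was that the image $P^{\Gamma(v)}$ in the $p'$-group $G_v^{\Gamma(v)}$ must be trivial.

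The main obstacle I anticipate is propagating this neighbourhood-closure from $v$ to a general $u\in F$. Invoking the same Sylow argument at $u$ would require knowing $p\nmid|G_u^{\Gamma(u)}|$, which is a priori the statement of the lemma at $u$, so the naive approach is circular. My intended way around this is a normaliser-orbit argument: $N_G(P)$ acts on $F$, and since $P$ is Sylow in each $G_u$ for $u\in F$, a standard Sylow transfer shows that the intersection of $F$ with each $G$-orbit on $\V(\Gamma)$ is a single $N_G(P)$-orbit, whose size is congruent to the common orbit size modulo $p$. Combining this with the lower bound $|F|\geq|\Gamma(v)|+1$ and the uniformity of the $G$-orbit sizes should force $F$ to meet, and hence entirely contain, every $G$-orbit, giving $F=\V(\Gamma)$ and the required contradiction. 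The technical heart of the proof, I expect, lies precisely in this propagation step; an alternative route would be an induction on the graph distance from $v$, again leveraging that $|G_u|=|G_v|$ at every stage to keep $P$ Sylow in each successive stabiliser.
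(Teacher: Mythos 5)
The propagation step you flag is not a technicality that a cleverer counting argument will rescue: it is the whole content, and as stated (with only the equal-orbit-size hypothesis) the lemma is in fact false, so no repair can work. Take $\Gamma=\K_{3,3}$ with parts $A=\{a_1,a_2,a_3\}$, $B=\{b_1,b_2,b_3\}$, and let $G\cong S_3\times \C_3$ act with the symmetric group on $A$ and the cyclic group of order $3$ on $B$. The graph is connected, both orbits have size $3$, and $|G_{a_1}|=6$, yet $G_{a_1}^{\Gamma(a_1)}\cong \C_3$; so $p=2$ divides $|G_{a_1}|$ but not $|G_{a_1}^{\Gamma(a_1)}|$. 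Running your argument here: $P=\langle (a_2\,a_3)\rangle$, and $F=\mathrm{Fix}(P)=\{a_1\}\cup B$. Your bookkeeping is all satisfied --- $P$ is Sylow in $G_u$ for every $u\in F$, the intersection of $F$ with each $G$-orbit is a single $\N_G(P)$-orbit, and $|F\cap A|=1\equiv 3$, $|F\cap B|=3\equiv 3 \pmod 2$ --- yet $F\neq\V(\Gamma)$. This pinpoints the false leap: the congruence $|F\cap\Omega|\equiv|\Omega|\pmod p$ forces $F$ to \emph{meet} each orbit (and only when $p$ does not divide the common orbit length), but ``meet, and hence entirely contain'' has no justification and is simply wrong. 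Your alternative, induction on distance, dies at the same place: $P$ is Sylow in $G_{b_1}\cong S_3$, but $P^{\Gamma(b_1)}\neq 1$ because $|G_{b_1}^{\Gamma(b_1)}|$ is even, so being Sylow in each successive stabiliser does not make $P$ act trivially on the next neighbourhood.

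For what it is worth, the paper offers no proof of this lemma (it is quoted as well known), so there is nothing to compare against; but the version that is true, and the only version the paper ever uses, carries a stronger hypothesis than equal orbit sizes. In every application the group in question ($N$, $K$ or $M$) is normal in the arc-transitive group $G$, and conjugating by elements of the vertex-transitive $G$ makes all the local groups $N_u^{\Gamma(u)}$ permutation isomorphic; equivalently one can assume $G$ itself is vertex-transitive, or weaken the conclusion to ``$p$ divides $|G_u^{\Gamma(u)}|$ for \emph{some} vertex $u$'' (which needs no orbit hypothesis at all and suffices for the paper after conjugation). Under any of these hypotheses your skeleton closes immediately and is the standard argument: if $p\nmid|G_u^{\Gamma(u)}|$ for every $u$, then a $p$-subgroup fixing a vertex fixes its entire neighbourhood pointwise, so its fixed-point set is closed under neighbourhoods and connectedness gives $P=1$ --- no normaliser-orbit counting is needed. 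The part of your proof that uses equal orbit sizes (making $P$ Sylow in every $G_u$ with $u\in F$) is correct but is not the missing ingredient; what is missing is the conjugacy of the local actions, and that cannot be extracted from equal orbit sizes alone.
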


\begin{lemma}\label{lemma:coprime}
Let $G$ be a permutation group and let $K$ be a normal subgroup of $G$ such that $G/K$ acts faithfully on the set of  $K$-orbits. If $G/K$ has a semiregular element $Kg$ of order $r$ coprime to $|K|$, then $G$ has a semiregular element of order $r$.
\end{lemma}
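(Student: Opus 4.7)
The plan is to locate inside $\langle g\rangle$ an element $a$ of order exactly $r$ that still projects to a generator of $\langle Kg\rangle$ in $G/K$, and then show that the semiregularity of $Ka$ on the set of $K$-orbits, together with $|a|=r$, forces $a$ to be semiregular on the underlying set.

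First I would exploit the coprimality hypothesis to control the order of $g$. Since $Kg$ has order $r$ in $G/K$, we have $g^r\in K$; because $|K|$ is coprime to $r$, the order $m$ of $g^r$ divides $|K|$ and is therefore coprime to $r$. Hence the order of $g$ in $G$ is $rm$ with $\gcd(r,m)=1$. By the usual primary decomposition in the cyclic group $\langle g\rangle$, I can take $a:=g^{m}$, which has order $r$; and since $\gcd(m,r)=1$, the image $Ka=(Kg)^{m}$ generates the same cyclic subgroup as $Kg$ in $G/K$, so $Ka$ is still semiregular of order $r$ on the set of $K$-orbits.

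The second step is the semiregularity check for $a$ on the original set $\Omega$ on which $G$ acts. Fix $v\in\Omega$ and let $B$ be its $K$-orbit. The natural projection $\Omega\to\Omega/K$ is $\langle a\rangle$-equivariant, where $\langle a\rangle$ acts on $\Omega/K$ through $\langle Ka\rangle$. Since $Ka$ is semiregular of order $r$ on the $K$-orbits, the $K$-orbits $B, aB,\ldots,a^{r-1}B$ are pairwise distinct, so the points $v,av,\ldots,a^{r-1}v$ are pairwise distinct, giving that the $\langle a\rangle$-orbit of $v$ has size at least $r$. On the other hand, $|a|=r$ forces this orbit to have size dividing $r$. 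Thus every $\langle a\rangle$-orbit on $\Omega$ has size exactly $r$, so $a$ is the desired semiregular element of order $r$.

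There is no real obstacle here: the only subtle point is to remember that the faithfulness of $G/K$ on $K$-orbits is not actually needed for the argument as I have set it up — it is used only to guarantee that $Ka$ has honest order $r$ as a permutation, which already follows from $Kg$ having order $r$ in $G/K$. The coprimality hypothesis is what genuinely does the work, since it both produces an element $a\in G$ of order exactly $r$ mapping to a generator of $\langle Kg\rangle$, and ensures, via $|a|=r$, that the $\langle a\rangle$-orbits on $\Omega$ cannot be larger than $r$.
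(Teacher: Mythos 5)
Your core argument is correct: producing $a=g^{m}$ of order exactly $r$ via $|g|=rm$ with $m=|g^{r}|$ dividing $|K|$ (hence coprime to $r$), and then observing that each $\langle a\rangle$-orbit on the permuted set $\Omega$ meets the $r$ pairwise distinct $K$-orbits $B,aB,\ldots,a^{r-1}B$ and so has size exactly $r=|a|$, is precisely the standard argument; the paper itself gives no proof, citing \cite[Lemma~2.3]{Verret}, and your write-up is essentially that proof made explicit.

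The one thing to correct is your closing remark that faithfulness is not really needed. Without faithfulness, $Kg$ having order $r$ in $G/K$ does \emph{not} imply that $Kg$ acts on the set of $K$-orbits as a permutation of order $r$; it may act with order a proper divisor $r'$ of $r$, and then the distinctness of $B,aB,\ldots,a^{r-1}B$ --- the key step of your second paragraph --- fails, giving only orbits of size at least $r'$. The unfaithful version of the statement is genuinely false: take $G=S_3$ in its natural action on three points and $K=A_3$, so that there is a single $K$-orbit; then $Kg$ of order $2$ (coprime to $|K|=3$) acts semiregularly, indeed trivially, on the set of $K$-orbits, yet $S_3$ has no semiregular element of order $2$. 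What your proof actually uses is that the cycles of $Kg$ (equivalently of $Ka$) on the $K$-orbits have length exactly $r$, and that is exactly what the faithfulness hypothesis, applied to $\langle Kg\rangle$, guarantees.
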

\begin{proof}
See for example \cite[Lemma~2.3]{Verret}.
\end{proof}

\begin{lemma}\label{lemma:primepowerdegree}
A transitive group of degree a power of a prime $p$ contains a semiregular element of order $p$.
\end{lemma}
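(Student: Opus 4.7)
\medskip

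\noindent\textbf{Proof plan.} The plan is to exhibit a semiregular element of order $p$ inside a Sylow $p$-subgroup $P$ of $G$, after first showing that such a $P$ must be transitive on the underlying set.

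Let $G$ act transitively on a set $\Omega$ with $|\Omega|=p^n$ and fix $v\in\Omega$. First I would invoke Sylow's theorem inside $G_v$ to choose a Sylow $p$-subgroup $P$ of $G$ for which $P\cap G_v$ is a Sylow $p$-subgroup of $G_v$. Since $|G:G_v|=p^n$, the $p$-parts of $|G|$ and $|G_v|$ differ by exactly the factor $p^n$, so $|P:P\cap G_v|=p^n$. Consequently the $P$-orbit of $v$ has size $p^n=|\Omega|$, which shows that $P$ is transitive on $\Omega$.

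Now $P$ is a non-trivial finite $p$-group, so its centre $\Z(P)$ is non-trivial and hence contains an element $z$ of order $p$. The key observation is that, because $z$ commutes with every element of $P$, the fixed-point set of $z$ in $\Omega$ is $P$-invariant. Since $P$ is transitive, this set is either empty or all of $\Omega$; the latter would force $z=1$, contradicting that $z$ has order $p$. Therefore $z$ has no fixed points, so every cycle of $z$ has length $p$, and $z$ is the required semiregular element.

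I do not anticipate a real obstacle: the argument is entirely standard. The only point that requires a little care is the initial choice of $P$ so that $P\cap G_v$ is a full Sylow $p$-subgroup of $G_v$, which is precisely what makes the orbit-length computation deliver the full set $\Omega$.
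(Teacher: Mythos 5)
Your proof is correct and follows essentially the same route as the paper's: show a Sylow $p$-subgroup is transitive, then use a nontrivial central element of it. You merely spell out the details (the choice of $P$ via a Sylow subgroup of $G_v$, and the fixed-point-set argument showing the central element of order $p$ is fixed-point-free) that the paper leaves implicit.
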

\begin{proof}
In a transitive group of degree a power of a prime $p$, every Sylow $p$-subgroup is transitive. A non-trivial element of the center of this subgroup must be semiregular.
\end{proof}

Recall that a permutation group is \emph{quasiprimitive} if every non-trivial normal subgroup is transitive, and \emph{biquasiprimitive} if it is not quasiprimitive and every non-trivial normal subgroup has at most two orbits.

\section{Arc-transitive graphs of prime valency}

In the most difficult part of our proof, the arc-transitive graph of valency twice a prime will have a normal quotient with prime valency. We will thus need a lot of information about arc-transitive graphs of prime valency, which we collect in this section. We start with the following result, which is  \cite[Theorem~5]{BurnessMichael}:

\begin{theorem}\label{theo:submain}
Let $\Gamma$ be a connected $G$-arc-transitive graph of prime valency $p$ such that the action of $G$ on $\V(\Gamma)$ is either quasiprimitive
or biquasiprimitive. Then one of the following holds:
\begin{enumerate}
\item $G$ contains a semiregular element of odd prime order;
\item $|\V(\Gamma)|$ is a power of $2$;
\item $\Gamma = \K_{12}$, $G = \M_{11}$ and $p = 11$;
\item $|\V(\Gamma)| = (p^2 -1)/2s$ and $G = \PSL_2(p)$ or $\PGL_2(p)$, where $p$ is a Mersenne prime and $\C_r \leq \C_s < \C_{(p-1)/2}$ , where $r$ is the product of the distinct prime divisors of $(p-1)/2$;
\item $|\V(\Gamma)| = (p^2 - 1)/s$ and $G = \PGL_2(p)$, where $p$ and $s$ are as in part (4), and $G$ is the standard double cover of the graph given in (4).
\end{enumerate}
\end{theorem}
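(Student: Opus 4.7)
The strategy is to combine Praeger's quasiprimitive version of the O'Nan-Scott theorem with a CFSG-based case analysis, arranged so as to isolate precisely the configurations in which no semiregular element of odd prime order can exist. First I would reduce the biquasiprimitive case to the quasiprimitive one by passing to the index-two subgroup $G^+$ preserving the unique $G$-invariant bipartition: $G^+$ acts quasiprimitively on each part, and outcome (5) is exactly the ``standard double cover'' that arises when an outcome-(4) graph for such a $G^+$ is lifted back to $G$. It thus suffices to treat the quasiprimitive case and show that outside of (2)-(4) some semiregular element of odd prime order exists.

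For the quasiprimitive case, I would go through Praeger's eight types in turn. In the affine (HA) type the elementary abelian socle acts regularly, so every non-identity socle element is semiregular: this gives (1) in odd characteristic and (2) in characteristic $2$. For the product types HS, HC, SD, CD, TW, and PA with socle $T^k$ and $T$ nonabelian simple, the socle acts with very small point stabilizer; choosing an element of odd prime order in $T$ that avoids the stabilizer, and invoking Lemma~\ref{lemma:coprime} or Lemma~\ref{lemma:primepowerdegree} where needed, produces a semiregular element of $G$. The prime-valency hypothesis enters throughout via Lemma~\ref{lem:locallyreg}: since $G_v^{\Gamma(v)}$ is a transitive subgroup of $\Sym(p)$, Burnside's theorem on transitive groups of prime degree forces it to be either solvable with normal Sylow $p$-subgroup or $2$-transitive, which sharply restricts the primes dividing $|G_v|$ and hence which odd primes must be absorbed into vertex stabilizers.

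The main work, and the principal obstacle, is the almost simple type, where $\mathrm{soc}(G) = T$ is a nonabelian simple group. Here one iterates over the CFSG families: for each $T$ and each faithful arc-transitive action of prime valency $p$, one must decide whether some odd prime $r$ dividing $|T{:}T_v|$ gives rise to a fixed-point-free, hence semiregular, element. The delicate case is $T = \PSL_2(p)$ or $\PGL_2(p)$ acting on cosets of a Borel-type subgroup of order $ps$: the Mersenne condition $p+1 = 2^k$ is precisely what makes every odd prime divisor of $|\V(\Gamma)|$ also divide $|T_v|$, so that no semiregular element of odd prime order can exist, producing (4) and (5). The sporadic sweep singles out the unique exception $\M_{11}$ acting $3$-transitively on $12$ points, yielding $\Gamma = \K_{12}$ with $p = 11$ in (3). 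The arithmetic heart of the argument is the sharpness of the Mersenne condition: it controls precisely when the primitive prime divisors of $p-1$, $p$, and $p+1$ can collude to be absorbed into the vertex stabilizer, and verifying this in the $\PSL_2$ case, against the backdrop of Aschbacher's theorem and element-order bookkeeping in all other classical and exceptional families, is the most technically demanding step.
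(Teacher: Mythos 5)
The paper does not prove Theorem~\ref{theo:submain} at all: it is quoted verbatim from Burness and Giudici \cite[Theorem~5]{BurnessMichael}, so the only fair comparison is with that reference, whose proof does follow the broad architecture you describe (Praeger's quasiprimitive O'Nan--Scott theorem plus a CFSG-driven analysis of the almost simple case). Your outline correctly identifies the skeleton, but it is a plan rather than a proof: the statement ``one iterates over the CFSG families \dots\ against the backdrop of Aschbacher's theorem and element-order bookkeeping'' is precisely the content of some thirty pages of the cited paper, and nothing in your write-up actually establishes that the exceptions are exactly (2)--(5). In particular, showing that \emph{no} other almost simple group with a prime-valency arc-transitive action of degree not a power of $2$ avoids odd-prime-order derangements, and that the $\PSL_2(p)$ configurations really do avoid them, is the theorem; it cannot be asserted.

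Two steps as written would fail. First, the biquasiprimitive reduction: it is not true in general that the index-two subgroup $G^+$ preserving the bipartition acts quasiprimitively on each part (a nontrivial normal subgroup of $G^+$ intransitive on a part need not yield a normal subgroup of $G$ with more than two orbits), so one needs Praeger's structure theorem for biquasiprimitive groups rather than a one-line reduction; this is also where case (5) has to be derived, not just recognised. Second, for the types HS, HC, SD, CD, TW and PA the socle stabiliser is not ``very small'' --- in SD type it is a full diagonal copy of $T$, and in PA type it is a product of large subgroups --- so ``choosing an element of odd prime order in $T$ that avoids the stabilizer'' is exactly the nontrivial claim to be proved, not a free move. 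Relatedly, in the $\PSL_2(p)$ case the arithmetic condition that every odd prime divisor of $|\V(\Gamma)|$ divides $|T_v|$ is necessary but not sufficient for the absence of odd-order derangements: you must also show that every element of each such odd prime order is conjugate into $T_v$, which relies on the specific fact that $\PSL_2(p)$ has a single class of cyclic subgroups of each order dividing $(p-1)/2$. These gaps are fillable --- the cited paper fills them --- but as it stands your argument assumes the hard part.
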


We note that in Cases (4) and (5), we must have $p\geq 127$, since this is the smallest Mersenne prime $p$ such that $(p-1)/2$ is not squarefree. This fact will be used at the end of Section~\ref{sec:main}.

\begin{corollary}\label{cor:pvalent}
Let $\Gamma$ be a connected $G$-arc-transitive graph of prime valency. Then one of the following holds:
\begin{enumerate}
\item $G$ contains a semiregular element of odd prime order;
\item $|\V(\Gamma)|$ is a power of $2$;
\item $G$ contains a normal $2$-subgroup $P$ such that $(\Gamma/P,G/P)$ is one of the graph-group pairs in (3--5) of Theorem~\ref{theo:submain}.
\end{enumerate}
\end{corollary}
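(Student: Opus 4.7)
My plan is to reduce to the quasiprimitive or biquasiprimitive case via a normal quotient, apply Theorem~\ref{theo:submain} to the quotient, and lift the conclusions back to $G$. Specifically, I would choose $N\trianglelefteq G$ maximal subject to having at least three orbits on $\V(\Gamma)$ (such $N$ exists because $|\V(\Gamma)|\geq 3$), and then verify three standard facts: (a) $N$ is semiregular on $\V(\Gamma)$; (b) $\Gamma/N$ is $G/N$-arc-transitive of the same prime valency $p$; and (c) $G/N$ acts faithfully on $\V(\Gamma/N)$ and is quasiprimitive or biquasiprimitive. For (a) and (b), the engine is a block-system argument: the orbits of $N_v\trianglelefteq G_v$ on $\Gamma(v)$ form a $G_v$-block system of a transitive set of prime size $p$, so either every $N$-orbit meets $\Gamma(v)$ in exactly one point (giving valency $p$ and forcing $N_v$ to fix $\Gamma(v)$ pointwise, hence $N_v=1$ by connectedness) or all of $\Gamma(v)$ lies in a single $N$-orbit; vertex-transitivity and connectedness then force the latter to yield only one or two $N$-orbits on $\V(\Gamma)$, contradicting our choice. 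For (c), the kernel of the induced action is a normal subgroup of $G$ with the same orbits as $N$ (hence equal to $N$ by maximality), and any normal subgroup strictly containing $N$ has at most two orbits on $\V(\Gamma)$ again by maximality.

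With this reduction in hand, the argument splits on the arithmetic of $|N|$. If some odd prime $r$ divides $|N|$, Cauchy's theorem provides $n\in N$ of order $r$, and semiregularity of $N$ forces every cycle of $n$ to have length $r$; so $n$ is a semiregular element of odd prime order in $G$, and conclusion~(1) of the corollary holds immediately.

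Otherwise $|N|$ is a power of $2$, and I apply Theorem~\ref{theo:submain} to the pair $(\Gamma/N,G/N)$. Outcome~(1) provides a semiregular element of odd prime order $r$ in $G/N$; since $\gcd(r,|N|)=1$, Lemma~\ref{lemma:coprime} lifts it to a semiregular element of order $r$ in $G$, giving conclusion~(1). Outcome~(2) gives $|\V(\Gamma/N)|$ a power of $2$, and combined with $|\V(\Gamma)|=|N|\cdot|\V(\Gamma/N)|$ (which holds because $N$ is semiregular) this forces $|\V(\Gamma)|$ to be a power of $2$, giving conclusion~(2). In outcomes (3)--(5) the pair $(\Gamma/N,G/N)$ is precisely one of the listed exceptional pairs, so setting $P:=N$, a normal $2$-subgroup of $G$, yields conclusion~(3).

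The main technical hurdle is really the setup in the first paragraph: carefully justifying the normal quotient machinery, in particular the block-system step that gives semiregularity of $N$ and preservation of the prime valency in $\Gamma/N$. This step uses the prime valency hypothesis crucially and the at-least-three-orbits condition to rule out the bipartite degeneration. Once this foundation is laid, the arithmetic dichotomy on $|N|$ combined with Lemma~\ref{lemma:coprime} reduces the rest of the proof to routine bookkeeping.
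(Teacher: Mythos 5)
Your proposal is correct and follows essentially the same route as the paper: take a normal subgroup maximal subject to having at least three orbits, pass to the (faithful, quasiprimitive or biquasiprimitive) quotient, apply Theorem~\ref{theo:submain}, and lift conclusions back via Cauchy's theorem, Lemma~\ref{lemma:coprime}, and semiregularity of the kernel. The only difference is that where you prove semiregularity of $N$ and preservation of the prime valency directly by the block-system argument (stated slightly loosely, since the relevant blocks are the intersections of the $N$-orbits with $\Gamma(v)$ rather than the $N_v$-orbits, but the intended argument is the standard correct one), the paper simply cites \cite[Theorem~9]{Lorimer}.
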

\begin{proof}
Suppose that $|\V(\Gamma)|$ is not a power of $2$ and let $P$ be a normal subgroup of $G$ that is maximal subject to having at least three orbits on $\V(\Gamma)$. In particular, $P$ is the kernel of the action of $G$ on the set of $P$-orbits. Hence $G/P$  acts faithfully, and quasiprimitively or biquasiprimitively on $\V(\Gamma/P)$. Since $\Gamma$ has prime valency and $G$-arc-transitive, \cite[Theorem 9]{Lorimer} implies that $P$ is semiregular.  We may thus assume that $P$ is a $2$-group. (Otherwise $P$ contains a semiregular element of odd prime order.) If  $G/P$ contains a semiregular element of odd prime order, then Lemma~\ref{lemma:coprime} implies that so does $G$. We may assume that this is not the case. Similarly, we may assume that $|\V(\Gamma/P)|$ is not a power of $2$. (Otherwise, $|\V(\Gamma)|$ is a power of $2$.) It follows that $\Gamma/P$ and $G/P$ are as is  (3--5) of Theorem~\ref{theo:submain}.
\end{proof}

We will now prove some more results about the graphs that appear in (3--5) of Theorem~\ref{theo:submain}. Let us first recall the notion of \emph{coset graphs}. Let $G$ be a group with a subgroup $H$ and let $g\in G$ such that $g^2\in H$ but $g\notin \N_G(H)$. The graph $\Cos(G,H,HgH)$ has vertices  the right cosets of $H$ in $G$, with two cosets $Hx$ and $Hy$  adjacent if and only if $xy^{-1}\in HgH$. Observe that the action of $G$ on the set of vertices by right multiplication induces an arc-transitive group of automorphisms such that $H$ is the stabiliser of a vertex. Moreover, every arc-transitive graph can be constructed in this way \cite{Sabidussi}.

\begin{lemma}\label{lem:triangle}
The graphs in (3) and (4) of Theorem~\ref{theo:submain} have a triangle.
\end{lemma}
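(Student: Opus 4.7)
My plan begins with case (3), which is immediate because $K_{12}$ is complete. For case (4) I realise $\Gamma$ as a coset graph and produce a triangle by a short matrix computation.

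Write $G = \PSL_2(p)$ (or $\PGL_2(p)$) acting on $\mathbb{P}^1(\mathbb{F}_p)$ by M\"obius transformations, and let $B$ be the Borel subgroup stabilising $\infty$. Then $B = P \rtimes T$, where $P = \langle u \rangle$ with $u : x \mapsto x+1$, and $T$ is the diagonal torus. A short argument using the uniqueness of subgroups of prescribed order in a cyclic group shows that the vertex-stabiliser $H$ equals $P \rtimes T_0$ for some $T_0 \leq T$, is normal in $B$, and satisfies $N_G(H) = B$. Taking $g : x \mapsto -1/x$, one verifies that $g^2 = 1 \in H$, $g \notin B$, and $H \cap H^g = T_0$, so that $\Cos(G, H, HgH)$ is a $G$-arc-transitive graph of valency $p$ on $(p^2-1)/(2s)$ vertices, hence isomorphic to $\Gamma$.

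The neighbours of $v_0 := H$ in this coset graph are $w_a := Hgu^a$ for $a \in \mathbb{F}_p$, and $w_a \sim w_b$ if and only if $gu^{a-b}g \in HgH$. The key identity is
\[
u^{-1} g u^{-1} = gug \qquad \text{in } \SL_2(\mathbb{F}_p),
\]
verified by direct matrix multiplication since both sides equal $\begin{pmatrix} -1 & 0 \\ 1 & -1 \end{pmatrix}$. Because $u^{-1} \in P \leq H$, this places $gug$ inside $HgH$, so $w_0 \sim w_1$, and $\{v_0, w_0, w_1\}$ is the desired triangle.

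The main obstacle I anticipate is justifying that $\Gamma$ is genuinely isomorphic to this specific coset graph, rather than to one arising from a different choice of arc-reversing element. One handles this by observing that any such element may be replaced (without changing its double coset) by one lying in $N_G(T_0)$, and since $|T_0| \geq 3$ this normaliser coincides with $N_G(T)$; any such element thus takes the form $x \mapsto -\lambda/x$ for some $\lambda \in \mathbb{F}_p^*$, and a completely analogous matrix computation still produces a triangle.
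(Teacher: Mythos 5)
Your approach is correct in outline, and on the one genuinely delicate point it takes a different route from the paper. Both arguments dismiss case (3) at once and finish case (4) with the same kind of computation: a braid-type identity in a coset graph $\Cos(G,H,HgH)$ exhibiting a triangle through the vertex $H$ (your identity $gug=u^{-1}gu^{-1}$ is the mirror image of the paper's $g(gh)^{-1}=hgh$ with a lower unipotent $h$). Where you differ is in how the dependence on the choice of the double coset is removed. The paper shows that the centraliser of $G$ in $\Sym(\V(\Gamma))$ is semiregular of order $(p-1)/2s$ and permutes the $p$ nontrivial $H$-orbits transitively, so that the isomorphism type of $\Cos(G,H,HgH)$ does not depend on $g\notin\N_G(H)$, and then computes with one convenient $g$. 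You instead normalise the arc-reversing element itself: the arc-stabiliser is an $H$-conjugate of $T_0$, so after conjugating within the double coset you may assume $H\cap H^{g}=T_0$, and then $T_0^{g}=H^{g}\cap H^{g^{2}}=T_0$ forces $g\in\N_G(T_0)=\N_G(T)$ (valid since $s\geq r\geq 3$, as $(p-1)/2$ is odd and greater than $1$), hence $g$ is antidiagonal; you then compute for every such element. Your route trades the paper's centraliser argument for a parametrised matrix computation; both are short and standard, and yours has the small advantage of not needing the transitivity-on-double-cosets step at all.

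Two repairs are needed, both minor. First, $u^{-1}gu^{-1}$ and $gug$ differ by $-I$, so your key identity holds in $\PSL_2(p)$ (or $\PGL_2(p)$), not in $\SL_2(\mathbb{F}_p)$ as stated; this is harmless. Second, the phrase ``a completely analogous matrix computation'' is only literally true when $\lambda$ is a square: writing $g\colon x\mapsto-\lambda/x$ as the image of $\begin{pmatrix}0&\lambda\\-1&0\end{pmatrix}$, one needs $r$ with $r^{2}=\lambda$ to get $u^{r}gu^{r}$ projectively equal to a lower unipotent, which is what $gu^{-r}g$ is. Every antidiagonal element of $\PSL_2(p)$ has this form (it acts as $x\mapsto -b^{2}/x$), so if you first pass to the socle $\PSL_2(p)$ -- which, as the paper notes, is already arc-transitive here -- your computation goes through verbatim. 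If instead you keep $G=\PGL_2(p)$, an arc-reversing element outside $\PSL_2(p)$ has nonsquare $\lambda$, and you must first multiply $g$ on either side by elements of $T_0$ (now of even order $2s$, while the squares form a subgroup of odd order $(p-1)/2$, so some element of $\lambda\cdot\langle\text{that }T_0\text{-image}\rangle$ is a square) before the unipotent trick applies. Either fix is one line, but as written the general-$\lambda$ claim has a gap in the $\PGL_2(p)$ branch.
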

\begin{proof}
Clearly $K_{12}$ has a triangle so suppose that $\Gamma$ is one of the graphs given in (4). Since $G$ is quasiprimitive on $\V(\Gamma)$ and $\Gamma$ has prime valency, it follows that $\PSL_2(p)$ is arc-transitive on $\Gamma$ and so $\Gamma=\Cos(G,H,HgH)$ where $G=\PSL_2(p)$, $H\cong \C_p\rtimes \C_s$ and $g\in G\backslash \N_G(H)$ such that $HgH$ is a union of $p$ distinct right cosets of $H$. 

Let $x\in \N_G(H)\cong \C_p\rtimes \C_{(p-1)/2}$. Note that $G$ acts 2-transitively on the set of right cosets of $\N_G(H)$ with the stabiliser of any two points being isomorphic to $\C_{(p-1)/2}$.  Now the coset $Hx\in \V(\Gamma)$ is fixed by $H$ and in particular $H$ has $|\N_G(H):H|=(p(p-1)/2)/ps=(p-1)/2s$ fixed points on $\V(\Gamma)$. If $y\notin \N_G(H)$, then the orbit of $Hy$ under $H$ has length $|H:H\cap H^y|$ and since  $|\N_G(H): (\N_G(H)\cap \N_G(H)^y)|=p$, it follows that  $|H:H\cap H^y|=p$. Thus the points of $\V(\Gamma)$ that are not fixed by $H$  are permuted by $H$ in orbits of size $p$ and so for any $g\notin \N_G(H)$ we have that $HgH$ is a union of $p$ distinct right cosets of $H$.

For each $x\in \N_G(H)$ define the bijection $\lambda_x$ of $\V(\Gamma)$ by $Hy\mapsto x^{-1}Hy=Hx^{-1}y$.  Since $G$ acts on $\V(\Gamma)$ by right multiplication, we see that $\lambda_x$ commutes with each element of $G$. Moreover, $\lambda_x$  is nontrivial if and only if $x\notin H$.
Let  $C=\{\lambda_x\mid x\in \N_G(H)\}\leqslant \Sym(\V(\Gamma))$.  Since $G$ acts transitively on $\V(\Gamma)$ and $C$ centralises $G$, it follows from \cite[Theorem 4.2A]{DM}  that $C$ acts semiregularly on $\V(\Gamma)$. Now $|C|=|\N_G(H):H|=(p-1)/2s$ and $G\times C\leqslant \Sym(\V(\Gamma))$. Since $C\trianglelefteq G\times C$, the set of  orbits of $C$ forms a system of imprimitivity  for $G\times C$ and hence for $G$. Moreover, since $C$ is semiregular,  comparing orders yields that $C$ has  $|\V(\Gamma)|/|C|=p+1$ orbits. One of these orbits is the set of fixed points of $H$ and $H$ transitively permutes the remaining $p$ orbits of $C$. In particular, it follows that $C$ transitively permutes the nontrivial orbits of $H$ and so the isomorphism type of $\Gamma$ does not depend on the choice of the double coset $HgH$. 

Let $Z$ be the subgroup of scalar matrices in $\SL_2(p)$ and let $\hat{H}$ be the subgroup of the  stabiliser in $\SL_2(p)$ of the 1-dimensional subspace $\langle (1,0)\rangle$ such that $H=\hat{H}/Z$. Note that $\hat{h}= \begin{pmatrix}1&0\\1&1\end{pmatrix}\in \hat{H}$ and let $\hat{g}=\begin{pmatrix}0&1\\-1&0\end{pmatrix}$.
In particular, letting $h=\hat{h}Z$ and $g=\hat{g}Z$ we have that $g\notin \N_G(H)$ and so we may assume that $\Gamma=\Cos(G,H,HgH)$. Now $Hg$ and $Hgh$ are both adjacent to $H$ and one can check that $g(gh)^{-1}=hgh\in HgH$ and so $\{H,Hg,Hgh\}$ is a triangle in $\Gamma$.
\end{proof}

\begin{definition}\label{def:dense}
Let $\Gamma$ be a graph and let $S_0$ be a subset of $\V(\Gamma)$. Let $S=S_0$. If a vertex $u$ outside $S$ has at least two neighbours in $S$, add $u$ to $S$. Repeat this procedure until no more vertex outside $S$ have this property. If at the end of the procedure, we have $S=\V(\Gamma)$, then we say that $\Gamma$ is \textit{dense with respect to $S_0$}.
\end{definition}

\begin{corollary}\label{cor:dense}
Let $\Gamma$ be a graph in (3) or (4) of Theorem~\ref{theo:submain} and let $S_0=\{u,v\}$ be an edge of $\Gamma$. Then $\Gamma$ is dense with respect to $S_0$.
\end{corollary}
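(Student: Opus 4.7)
If $\Gamma=\K_{12}$, every vertex outside $S_0=\{u,v\}$ is adjacent to both $u$ and $v$, so after a single iteration the procedure of Definition~\ref{def:dense} terminates with $S=\V(\Gamma)$. I therefore assume $\Gamma$ is one of the graphs in Case (4) of Theorem~\ref{theo:submain}.

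The plan has two stages. First I will show that the final set $S$ contains $\{u\}\cup\Gamma(u)$; then I will extend this to $S=\V(\Gamma)$. The key fact for the first stage is that the \emph{local graph} $\Delta:=\Gamma[\Gamma(u)]$ is connected. Indeed, since $\Gamma$ is $G$-arc-transitive, $G_u$ acts transitively on $\V(\Delta)$, which has prime cardinality $p$; therefore the connected components of $\Delta$ are permuted transitively by $G_u$ and all have a common size dividing $p$. By Lemma~\ref{lem:triangle} and arc-transitivity the edge $\{u,v\}$ lies in a triangle, so $\Delta$ has at least one edge and its components cannot all be singletons; hence $\Delta$ is connected.

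With this in hand, I would grow $S$ inside $\Gamma(u)$ along a spanning tree of $\Delta$ rooted at $v$: any $w\in\Gamma(u)$ that is adjacent in $\Delta$ to some $v'\in S\cap\Gamma(u)$ has both $u$ and $v'$ as neighbours in $S$, so the procedure adds $w$. Iterating along the tree yields $\{u\}\cup\Gamma(u)\subseteq S$.

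For the second stage, every vertex of $\Gamma(u)$ now has $u$ as a neighbour in $S$. Applying the same argument (using $G$-arc-transitivity to move any fixed edge of $\Gamma$ to $\{u,v\}$) to an edge $\{x,y\}\subseteq S$ shows that every vertex of $\Gamma(x)$ belongs to the set produced by the procedure started from $\{x,y\}$; since the latter set is the smallest closed set containing $\{x,y\}$ and $S$ is already such a closed set, this gives $\Gamma(x)\subseteq S$. Iterating, $S$ contains every vertex at any finite distance from $u$, so the connectedness of $\Gamma$ forces $S=\V(\Gamma)$. The only real obstacle I anticipate is the connectedness of $\Delta$, which the short orbit argument above settles without needing the circulant description of $\Delta$ that could alternatively be extracted from the coset-graph computations in the proof of Lemma~\ref{lem:triangle}.
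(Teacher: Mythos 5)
Your proposal is correct and follows essentially the same route as the paper: the local graph at a vertex is vertex-transitive of prime order $p$ and, having an edge by Lemma~\ref{lem:triangle}, must be connected (you argue via equal-sized components dividing $p$, the paper via vertex-primitivity), after which the closure absorbs the whole neighbourhood and propagation along edges plus connectedness of $\Gamma$ finishes the job. Your explicit handling of $\K_{12}$ and the closure-minimality argument in the second stage merely spell out details the paper compresses into ``repeating this argument''.
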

\begin{proof}
Since $\Gamma$ is arc-transitive of prime valency $p$, the local graph at $v$ (that is, the subgraph induced on $\Gamma(v)$) is a vertex-transitive graph of order $p$ and thus vertex-primitive. By Lemma~\ref{lem:triangle}, $\Gamma$ has a triangle so the local graph has at least one edge and thus must be connected. It follows that, running the process described in Definition~\ref{def:dense} starting at $S=S_0$, eventually $S$ will contain all neighbours of $v$. Repeating this argument and using connectedness of $\Gamma$ yields the desired conclusion.
\end{proof}

The following is immediate from Definition~\ref{def:dense}.

\begin{lemma}\label{lem:dense2}
Let $\Gamma$ be a graph and $S_0$ be a set of vertices such that $\Gamma$ is dense with respect to $S_0$. Then the standard double cover of $\Gamma$, with vertex-set $\V(\Gamma)\times\{0,1\}$, is dense with respect to $S_0\times \{0,1\}$.
\end{lemma}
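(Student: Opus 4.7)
The plan is to mimic, in the double cover, the saturation process that takes place in $\Gamma$. Let $\widetilde\Gamma$ denote the standard double cover, so that the neighbours of $(v,i)$ in $\widetilde\Gamma$ are precisely the vertices $(w,1-i)$ with $w\in\Gamma(v)$. Starting from $S_0\times\{0,1\}$, I would run the process described in Definition~\ref{def:dense} and track which vertices get added.

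The key claim, proved by induction on the stage at which a vertex of $\Gamma$ enters $S$ in the original process on $\Gamma$, is the following: whenever $v\in S$ during the $\Gamma$-process, both $(v,0)$ and $(v,1)$ lie in $S$ during the $\widetilde\Gamma$-process. The base case is immediate because $S_0\times\{0,1\}$ is our starting set. For the inductive step, suppose $u$ is added to $S$ in $\Gamma$ at some stage because it has two neighbours $v_1,v_2\in S$ already present. By the induction hypothesis, the four vertices $(v_1,0),(v_1,1),(v_2,0),(v_2,1)$ already lie in the current $S$ for $\widetilde\Gamma$. But then $(u,0)$ has the two neighbours $(v_1,1)$ and $(v_2,1)$ in $S$, and $(u,1)$ has $(v_1,0)$ and $(v_2,0)$ in $S$, so both copies of $u$ may be added to $S$ in the $\widetilde\Gamma$-process.

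Since $\Gamma$ is dense with respect to $S_0$, the original process eventually adds every vertex of $\Gamma$ to $S$; by the claim, both copies of every vertex of $\Gamma$ eventually lie in the saturated set for $\widetilde\Gamma$, which is exactly $\V(\widetilde\Gamma)$. This gives the desired conclusion. There is no real obstacle here beyond bookkeeping: the proof is genuinely immediate once one notices that the two fibres above a given vertex can be saturated in parallel, each using the opposite fibre above its two witnessing neighbours.
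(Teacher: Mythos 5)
Your proof is correct and is precisely the argument the paper has in mind when it calls the lemma ``immediate from Definition~\ref{def:dense}'': the two witnessing neighbours $v_1\neq v_2$ of $u$ give the distinct neighbours $(v_1,1-i),(v_2,1-i)$ of $(u,i)$ in the double cover, so the saturation in $\widetilde\Gamma$ tracks that in $\Gamma$ fibre by fibre. No discrepancy with the paper; your write-up simply makes the induction explicit.
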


\section{Proof of Theorem~\ref{theo:main}}\label{sec:main}
Let $p$ be a prime, let $\Gamma$ be an arc-transitive graph of valency $2p$ and let $G=\Aut(\Gamma)$. We may assume that $\Gamma$ is connected. If $G$ is quasiprimitive  or bi-quasiprimitive, then $G$ contains a nontrivial semiregular element, by  \cite[Theorem 1.1]{Michael} and \cite[Theorem~1.4]{MichaelXu}. We may thus assume that $G$ has a minimal normal subgroup $N$ such that $N$ has at least three  orbits. In particular, $\Gamma/N$ has valency at least $2$.

If $N$ is nonabelian, then $G$ has a nontrivial semiregular element by \cite[Theorem 1.1]{JingXu}. We may therefore assume that $N$ is abelian and, in particular, $N$ is an elementary abelian $q$-group for some prime $q$. 

We may also assume that $N$ is not semiregular that is, $N_v\neq 1$ for some vertex $v$. It follows  from Lemma \ref{lem:locallyreg} that $1\neq N_v^{\Gamma(v)}\trianglelefteq G_v^{\Gamma(v)}$. As $\Gamma$ is $G$-arc-transitive, we have that $G_v^{\Gamma(v)}$ is transitive and so the orbits of $N_v^{\Gamma(v)}$ all have the same size, either $2$ or $p$. Since $N$ is a $q$-group, this size is equal to $q$.  Writing $d$ for the valency of  $\Gamma/N$, we have that either $(d,q)=(2,p)$ or $(d,q)=(p,2)$.

If $d=2$ and $q=p$, then it follows  from \cite[Theorem~$1$]{PraegerXu} that $\Gamma$ is isomorphic to a graph denoted by $\C(p,r,s)$ in  \cite{PraegerXu}. By \cite[Theorem~$2.13$]{PraegerXu},  $\Aut(\C(p,r,s))$ contains the nontrivial semiregular automorphism $\varrho$ defined in \cite[Lemma~$2.5$]{PraegerXu}.

We may thus assume that $d=p$ and $q=2$. In this case, if $u$ is adjacent to $v$, then $u$ has exactly $2=2p/d$ neighbours in $v^N$. Let $K$ be the kernel of the action of $G$ on $N$-orbits. By the previous observation, the orbits of $K_v^{\Gamma(v)}$ have size $2$ and thus it is a $2$-group. It follows from Lemma \ref{lem:locallyreg} that $K_v$ is a $2$-group and thus so is $K=NK_v$. 

Now, $G/K$ is an arc-transitive group of automorphisms of $\Gamma/N$, so we may apply Corollary~\ref{cor:pvalent}. If $G/K$ has a semiregular element of odd prime order, then so does $G$, by Lemma~\ref{lemma:coprime}. If $|\V(\Gamma/N)|$ is a power of $2$, then so is $|\V(\Gamma)|$ and, in this case, $G$ contains a semiregular involution by Lemma~\ref{lemma:primepowerdegree}. We may thus assume that we are in case (3) of Corollary~\ref{cor:pvalent}, that is, $G/K$ contains a normal $2$-subgroup $P/K$ such that $(\Gamma/P,G/P)$ is one of the graph-group pairs in (3--5) of Theorem~\ref{theo:submain}. Note that $P$ is a $2$-group. Let $M$ be a minimal normal subgroup of $G$ contained in the centre of $P$. We may assume that $M$ is not semiregular hence $M_v\neq 1$ and so by Lemma \ref{lem:locallyreg}, $M_v^{\Gamma(v)}\neq 1$. Moreover, $|M|\neq 2$ as otherwise $M_v=M$ and we would deduce that $M$ fixes each element of $\V(\Gamma)$, a contradicition. Since $M$ is central in $P$, $M_v$ fixes every vertex in $v^P$.

Note that the $G$-conjugates of $M_v$ must cover $M$, otherwise $M$ contains a nontrivial semiregular element. By the previous paragraph, the number of conjugates of $M_v$ is bounded above by the number of $P$-orbits, that is $|\V(\Gamma/P)|$, so we have 
$$|M|\leq |M_v||\V(\Gamma/P)|.$$

Since $\Gamma$ is connected and $G$-arc-transitive, there are no edges within $P$-orbits. As $M_v^{\Gamma(v)}\neq 1$, there exists $g\in M_v$ such that $w$ and $w^g$ are distinct neighbours of $v$. Let $u$ be the other neighbour of $w$ in $v^P$. Since $M_v$ fixes every element of $v^P$ it follows that $u$ is also a neighbour of $w$ and $w^g$ and so $\{v,w,u,w^g\}$ is a 4-cycle in $\Gamma$. Thus the graph induced between adjacent $P$-orbits is a union of $\C_4$'s. 

If $x$ is a vertex and $y^P$ is a $P$-orbit adjacent to $x$, then there is a unique $\C_4$ containing $x$ between $x^P$ and $y^P$, and thus a unique vertex $z$ antipodal to $x$ in this $\C_4$. We say that $z$ is the \textit{buddy of $x$ with respect to $y^P$}. The set of buddies of $v$ is equal to $\Gamma_2(v)\cap v^P$, which is clearly fixed setwise by $G_v$. Moreover, each vertex has the same number of buddies. Furthermore, since $G_v$ transitively permutes the set of $p$ $P$-orbits adjacent to $v^P$, either $v$ has a unique buddy or has exactly $p$ buddies.

 If $v$ has a unique buddy $z$, then $\Gamma(v)=\Gamma(z)$, and so swapping every vertex with its unique buddy is a nontrivial semiregular automorphism.  Thus it remains to consider the case where $v$ has $p$ buddies. We first prove the following.

\smallskip

\textbf{Claim:} If $X$ is a subgroup of $M$ that fixes pointwise both $a^P$ and $b^P$, and $c^P$ is a $P$-orbit adjacent to both $a^P$ and $b^P$, then $X$ fixes $c^P$ pointwise.

\textsc{Proof:} Suppose that some $x\in X$ does not fix $c$. Now $x$ fixes $a^P$ pointwise, so $c^x$ must be the buddy of $c$ with respect to $a^P$. Similarly, $c^x$ must be the buddy of $c$ with respect to $b^P$. These are distinct, which is a contradiction. It follows that $X$ fixes $c$ and, since $X\leq M$, also $c^P$.

\smallskip

Let $s\geq 1$, let $\alpha=(v_0,\ldots,v_s)$ be an $s$-arc of $\Gamma$ and let $\alpha'=(v_0,\ldots,v_{s-1})$. Now $|{v_s}^{M_{v_{s-1}}}|=2$, so $|M_{v_{s-1}}:M_{v_{s-1}v_s}|=2$ and $|M_{\alpha'}:M_\alpha|\leq 2$. Applying induction yields that 
\begin{align}\label{arceq}
|M_{v_0}:M_\alpha|\leq 2^s.
\end{align}

We first assume that $\Gamma/P$ and $G/P$ are as in (3) or (4) of Theorem~\ref{theo:submain}. Let $\{u,v\}$ be an edge of $\Gamma$.  By the previous paragraph, we have $|M_v:M_{uv}|\leq 2$. Recall that $M_v$ fixes all vertices in $v^P$, so $M_{uv}$ fixes all vertices in $v^P\cup u^P$.  Combining the claim with Corollary~\ref{cor:dense} yields that $M_{uv}=1$ and thus $|M_v|=2$. It follows that $|M|\leq |M_v||\V(\Gamma/P)|$ so $|M|\leq 2|\V(\Gamma/P)|$. On the other hand, $M$ is an irreducible $(G/P)$-module over $\GF(2)$ of dimension at least two.  Since $G/P$ is nonabelian simple or has a nonabelian simple group as an index two subgroup, this implies that $M$ is a faithful irreducible $(G/P)$-module over $\GF(2)$. If $G/P=\M_{11}$, then $|M|\geq 2^{10}$ \cite{James}, contradicting $|M|\leqslant 2\cdot 12=24$. If $G/P=\PSL_2(p)$ or $\PGL_2(p)$ then by \cite{Burkhardt}, $|M|\geq 2^{(p-1)/2}$.  Recall that $p\geq 127$ and  so this contradicts $|M|\leqslant 2(p^2-1)/2s<p^2-1$.

Finally, we assume that $\Gamma/P$ is in (5) of Theorem~\ref{theo:submain}, that is, $\Gamma/P$ is the standard double cover of a graph $\Gamma'$ which appears in (4) of Theorem~\ref{theo:submain}. In particular, $\V(\Gamma/P)=\V(\Gamma')\times \{0,1\}$. By Lemma~\ref{lem:triangle}, $\Gamma'$ has a triangle, say $\{u,v,w\}$. By Corollary~\ref{cor:dense} and Lemma~\ref{lem:dense2}, $\Gamma/P$ is dense with respect to $\{u,v\}\times \{0,1\}$. Now, let  $\overline{\alpha}=((u,0),(v,1),(w,0),(u,1),(v,0))$. Since $\overline{\alpha}$ contains $\{u,v\}\times \{0,1\}$, $\Gamma/P$ is dense with respect to $\overline{\alpha}$. Note that $\overline{\alpha}$ is a $4$-arc of $\Gamma/P$. Let $\alpha$ be a $4$-arc of $\Gamma$ that projects to $\overline{\alpha}$. Since $\Gamma/P$ is dense with respect to $\overline{\alpha}$, arguing as in the last paragraph yields $M_\alpha=1$. On the other hand, if $v\in\V(\Gamma)$ is the the initial vertex of $\alpha$, then  by \eqref{arceq}, we have $|M_v:M_\alpha|\leq 2^4$ and thus $|M_v|\leq 2^4$. Since $|M|\leqslant |M_v||\V(\Gamma/P)|$ it follows that $|M|\leqslant 2^4(p^2-1)/s$.  As above, $M$ is a faithful irreducible $(G/P)$-module over $\GF(2)$ of dimension at least two. Since $G/P=\PGL_2(p)$ we have from \cite{Burkhardt} that $|M|\geqslant 2^{(p-1)/2}$, which again contradicts $|M| \leqslant 2^4(p^2-1)/s<2^4(p^2-1)$.

\bibliographystyle{amsplain}

\end{document}